\newtheorem{theorem}{{\bf Theorem}}[section]
\newtheorem{conjecture}[theorem]{{\bf Conjecture}}
\newtheorem{corollary}[theorem]{{\bf Corollary}}
\newtheorem{question}[theorem]{{\bf Question}}
\newcommand{\FF}{ \ensuremath{\mathbb{F}}}
\newcommand{\ZZ}{ \ensuremath{\mathbb{Z}}}
\newcommand{\TPSS}{S^{\hspace{.2mm}2}\! \times \hspace{-3.3mm}_{-} \,
S^{\hspace{.1mm}1}}
\begin{document}

\author[1] {Bhaskar Bagchi}
\author[2] {Basudeb Datta}
\author[3] {Jonathan Spreer}

 \affil[1] {Theoretical Statistics and Mathematics Unit, Indian Statistical Institute, Bangalore 560\,059,
 India. bbagchi@isibang.ac.in}
 \affil[2] {Department of Mathematics, Indian Institute of Science, Bangalore 560\,012, India.
 dattab@math.iisc.ernet.in.}
 \affil[3] {School of Mathematics and Physics, The University of Queensland, Brisbane QLD 4072, Australia.
 j.spreer@uq.edu.au.}

\title{A characterization of tightly triangulated 3-manifolds}


\date{January 08, 2016}

\maketitle

\vspace{-10mm}

\begin{abstract}
For a field $\mathbb{F}$, the notion of $\mathbb{F}$-tightness of simplicial complexes was introduced by K\"{u}hnel. K\"{u}hnel
and Lutz conjectured that any $\mathbb{F}$-tight triangulation of a closed manifold is the most economic of all possible
triangulations of the manifold. The boundary of a triangle is the only 
$\mathbb{F}$-tight triangulation of a closed
1-manifold. A triangulation of a closed 2-manifold is $\mathbb{F}$-tight if and only if it is $\mathbb{F}$-orientable and
neighbourly. In this paper we prove that a triangulation of a closed 3-manifold is 
$\mathbb{F}$-tight if and only if it
is $\mathbb{F}$-orientable, neighbourly and stacked. In consequence, the 
K\"{u}hnel-Lutz conjecture is valid in
dimension $\leq 3$.
\end{abstract}

\noindent {\small {\em MSC 2010\,:} 57Q15, 57R05.

\noindent {\em Keywords:} Stacked spheres; Stacked manifolds; Triangulations of 3-manifolds; Tight
triangulations.}

\section{Introduction}

All simplicial complexes considered in this paper are finite and abstract. The
vertex set of a simplicial complex $X$ will be denoted by $V(X)$. For $A \subseteq V(X)$, the induced subcomplex $X[A]$ of $X$ on the vertex set $A$ is defined by
$X[A] := \{\alpha\in X \, : \, \alpha\subseteq A\}$. 
For $x\in V(X)$, the subcomplexes $\{\alpha\in X \, : \, x \not\in\alpha\} = X[V(X)\setminus\{x\}]$ and $\{\alpha\in X : x\not\in\alpha, \alpha\sqcup\{x\}\in X\}$ are called the {\em antistar} and the {\em link} of $x$ in $X$, respectively. 
A simplicial complex $X$ is said to be a {\em triangulated $($closed$)$ manifold} if it triangulates a (closed) manifold, i.e., if the geometric carrier $|X|$ of $X$ is 
a (closed) topological manifold. A  triangulated closed $d$-manifold $X$ is said to be {\em $\FF$-orientable} if $H_d(X; \FF) \neq 0$. 
If two triangulated $d$-manifolds $X$ and $Y$ intersect precisely in a common $d$-face $\alpha$ then $X\# Y := (X \cup Y) \setminus \{\alpha\}$ triangulates the connected sum $|X|\# |Y|$ and is called the {\em connected sum of $X$ and $Y$ along 
$\alpha$}.

For our purpose, a {\em graph} may be defined as a simplicial complex of dimension $\leq 1$. For $n\geq 3$, the
{\em $n$-cycle} $C_n$ is the unique $n$-vertex connected graph in which each vertex lies on exactly two edges. For $n\geq
1$, the {\em complete graph} $K_n$ is the $n$-vertex graph in which any two vertices form an edge. For $m, n\geq
1$, the {\em complete bipartite graph} $K_{m, n}$ is the graph with $m+n$ vertices and $mn$ edges in which each
of the first $m$ vertices forms an edge with each of the last $n$ vertices. Two graphs are said to be {\em
homeomorphic} if their geometric carriers are homeomorphic. A graph is said to be {\em planar} if it is a
subcomplex of a triangulation of the 2-sphere $S^2$. In this paper, we shall have an occasion to use the easy
half of Kuratowski's famous characterization of planar graphs \cite{Book_BM}: A graph is planar if and only if it
has no homeomorph of $K_5$ or $K_{3, 3}$ as a subgraph.

If $\FF$ is a field and $X$ is a simplicial complex then, following K\"{u}hnel \cite{Ku}, we say that $X$ is
{$\FF$-tight} if (a) $X$ is connected, and (b) the $\FF$-linear map $H_{\ast}(Y; \FF) \to H_{\ast}(X; \FF)$,
induced by the inclusion map $Y \hookrightarrow X$, is injective for every induced subcomplex $Y$ of $X$.

If $X$ is a simplicial complex of dimension $d$, then its {\em face vector} $(f_0, \dots, f_d)$ is  defined by
$f_i=f_i(X) := \#\{\alpha\in X \, : \, \dim(\alpha)=i\}, \,  0\leq i\leq d$. A simplicial complex $X$ is said to be
{\em neighbourly} if any two of its vertices form an edge, i.e., if $f_{1}(X) = \binom{f_0(X)}{2}$.

A simplicial complex $X$ is said to be {\em strongly minimal} if, for every triangulation $Y$ of the geometric
carrier $|X|$ of $X$, we have $f_i(X) \leq f_i(Y)$ for all $i$, $0\leq i\leq \dim(X)$. Our interest in the notion
of $\FF$-tightness mainly stems from the following famous conjecture \cite{KL}.

\begin{conjecture}[K\"{u}hnel-Lutz] \label{conj:KL}
For any field $\FF$, every $\FF$-tight triangulated closed manifold is strongly minimal.
\end{conjecture}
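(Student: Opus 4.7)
The plan is to attack Conjecture~\ref{conj:KL} by combining a structural characterization of $\FF$-tight triangulations with the lower bound machinery for simplicial manifolds (Kalai's lower bound theorem and the Murai--Nevo generalized lower bound theorem). The overall strategy has three phases: (i) extract from $\FF$-tightness a rigid combinatorial shape for $X$; (ii) use that shape, together with the topology of $|X|$, to pin down $f_i(X)$ in closed form; (iii) invoke known lower bound theorems to show that every triangulation of the same manifold has face numbers at least as large.

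For phase (i), a few constraints come cheaply. Injectivity of $H_0(Y;\FF)\to H_0(X;\FF)$ for $Y=X[\{u,v\}]$ forces $uv$ to be an edge, so every $\FF$-tight closed manifold is neighbourly. Injectivity of $H_d(Y;\FF)\to H_d(X;\FF)$ forces $\FF$-orientability as soon as the top homology is nonzero. The hard part is to upgrade these to a full structural statement: I would conjecture and attempt to prove that an $\FF$-tight triangulation of a closed $d$-manifold is \emph{$k$-stacked} for $k=\lfloor d/2\rfloor$, meaning that $X$ arises as the boundary of a triangulated $(d{+}1)$-manifold all of whose interior faces have dimension greater than $k$. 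For $d=3$ this is exactly the paper's characterization ($\FF$-orientable, neighbourly, stacked). The inductive step would use that vertex links in $X$ inherit an analogous rigidity in one lower dimension, so that tightness propagates downward.

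For phase (ii), once $X$ is known to be neighbourly and $k$-stacked, Kalai's formula (and its extensions to $k$-stacked manifolds) expresses every $f_i(X)$ as a polynomial in $f_0(X)$ and the $\FF$-Betti numbers $\beta_\ast(|X|;\FF)$. Phase (iii) then applies the Murai--Nevo GLBT, which gives matching lower bounds on the $g$-vector of any triangulation $Y$ of $|X|$, with equality characterized precisely by the $k$-stacked, neighbourly triangulations. Chaining the two expressions gives $f_i(X)\leq f_i(Y)$ for every $i$, which is the definition of strong minimality.

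The main obstacle is phase (i). Even formulating the correct higher-dimensional analogue of ``stacked'' that is genuinely forced by $\FF$-tightness is delicate: the Kuratowski-style planarity arguments available in dimension~3 detect the stacked structure through the graph combinatorics of vertex links, but in dimension $d\geq 4$ the links are themselves triangulated manifolds whose stackedness is not a purely local graph condition. I expect progress to require either a Morse-theoretic argument that extracts $k$-stackedness directly from injectivity of $H_\ast(Y;\FF)\to H_\ast(X;\FF)$ over induced subcomplexes (perhaps by analyzing how each vertex deletion changes homology), or a rigidity-theoretic argument that forces equality in the GLBT from the $\FF$-tight hypothesis. Until one of these is in place, the conjecture is not accessible in full generality, and the best available reduction is to confirm it in a single dimension at a time---as the present paper does in dimension~3.
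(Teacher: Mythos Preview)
The statement you are addressing is an open \emph{conjecture}; the paper does not prove it in general. What the paper establishes is the case $d\le 3$ (Corollary~\ref{coro:KL}), and for $d=3$ this goes via the structural Theorem~\ref{theo:main} together with an earlier strong-minimality criterion for locally stacked tight manifolds. You acknowledge all of this in your final sentence, so what you have written is a research programme rather than a proof.

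That programme, however, already fails at phase~(i). Your proposed structural hypothesis---that every $\FF$-tight closed $d$-manifold is $\lfloor d/2\rfloor$-stacked---is false. For $d=2$ and $\FF=\ZZ_2$, the $6$-vertex real projective plane $\mathbb{RP}^2_6$ is $\ZZ_2$-tight (it is neighbourly and $\ZZ_2$-orientable), yet $\mathbb{RP}^2$ is not null-cobordant, so $\mathbb{RP}^2_6$ bounds no triangulated $3$-manifold whatsoever and in particular is not $1$-stacked. For $d=4$, the K\"{u}hnel $9$-vertex complex projective plane $\mathbb{CP}^2_9$ is $\FF$-tight for every field $\FF$, but $\mathbb{CP}^2$ has a nonzero Stiefel--Whitney number and therefore bounds no compact $5$-manifold; hence $\mathbb{CP}^2_9$ is not $2$-stacked. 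Thus the reduction to $k$-stackedness cannot succeed, and your phase~(iii) appeal to the equality case of the generalized lower bound theorem has no purchase on these examples. Note, incidentally, that the paper's own $d=2$ argument in the proof of Corollary~\ref{coro:KL} avoids stackedness entirely: it uses only neighbourliness and the Euler relation. Any general attack on Conjecture~\ref{conj:KL} will likewise have to proceed without assuming that the tight triangulation bounds anything.
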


Following Walkup \cite{Wa} and McMullen-Walkup \cite{MW}, a triangulated ball $B$ is said to be {\em stacked} if
all the faces of $B$ of codimension 2 are contained in the boundary $\partial B$ of $B$. A triangulated sphere
$S$ is said to be {\em stacked} if there is a stacked ball $B$ such that $S= \partial B$. This notion was extended to
triangulated manifolds by Murai and Nevo \cite{MNStacked}. Thus, a triangulated manifold $\Delta$ with boundary
is said to be {\em stacked} if all its faces of codimension 2 are contained in the boundary $\partial\Delta$ of
$\Delta$. A triangulated closed manifold $M$ is said to be {\em stacked} if there is a stacked triangulated
manifold $\Delta$ such that $M = \partial \Delta$. A triangulated manifold is said to be {\em locally stacked}
if all its vertex links are stacked spheres or stacked balls.
The main result of this paper is the following characterization of $\FF$-tight  triangulated closed 3-manifolds,
for all fields $\FF$.

\begin{theorem} \label{theo:main}
A triangulated closed $3$-manifold $M$ is $\FF$-tight if and only if $M$ is $\FF$-orientable, neighbourly and
stacked.
\end{theorem}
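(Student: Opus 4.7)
For sufficiency, assume $M$ is neighbourly, $\FF$-orientable and stacked. Write $M=\partial\Delta$ for a stacked $4$-manifold $\Delta$, which is built from a single $4$-simplex by iterated attachments of $4$-simplices along boundary $3$-faces. I would induct on the number of $4$-simplices in $\Delta$: the base case $M=\partial\sigma^4$ is $\FF$-tight by direct inspection, and for the inductive step each induced subcomplex of $M$ is compared with its counterpart before the last attachment via a Mayer--Vietoris argument, with neighbourliness ensuring the relevant intersections are connected and $\FF$-orientability propagating the top-dimensional class. This direction should be largely a reassembly of results already present in the literature on tight stacked triangulations.

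For necessity, assume $M$ is $\FF$-tight. Neighbourliness is immediate: a non-edge $\{u,v\}$ would give $H_0(M[\{u,v\}];\FF)=\FF^{\,2}$, which cannot inject into $H_0(M;\FF)=\FF$. For $\FF$-orientability, apply the tightness hypothesis to the antistar $Y_v=M[V(M)\setminus\{v\}]$: since $|Y_v|$ deformation retracts onto $|M|\setminus\{v\}$ and $\lk{M}{v}$ is a $2$-sphere, the long exact sequence of the pair $(M,Y_v)$ combined with the injection $H_2(Y_v;\FF)\hookrightarrow H_2(M;\FF)$ forces $H_3(M;\FF)\ne 0$.

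The heart of the argument is showing stackedness. My plan is to prove first that every vertex link $L_v=\lk{M}{v}$ is a stacked $2$-sphere, and then to deduce global stackedness of $M$, either by invoking a known characterization of closed stacked $3$-manifolds in terms of stacked vertex links or by establishing such a bridge directly. To obtain the local statement I argue by contradiction. Starting from a non-stacked $L_v$ and exploiting neighbourliness of $M$, I would construct an auxiliary graph $G$ on a suitable vertex subset of $V(M)$. On the one hand, tight injectivity applied to the induced subcomplex $M[\{v\}\cup V(G)]$ should yield a planar realisation of $G$ on the $2$-sphere $|L_v|$, by forcing certain cycles in $G$ to bound in the subcomplex as unions of filled triangles through $v$. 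On the other hand, the non-stacked local structure at $v$ should force $G$ to contain a subdivision of $K_5$ or $K_{3,3}$. Invoking the easy direction of Kuratowski's theorem, as recalled in the introduction, then produces the desired contradiction.

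The chief obstacle is identifying $G$ precisely and matching it with the right induced subcomplex. Even for the simplest non-stacked candidate links — for instance the octahedron — the naive ``non-link edges at $v$'' graph is too sparse to contain Kuratowski obstructions, so $G$ must be enriched with paths through secondary vertex stars. Simultaneously one must guarantee that the cycle one extracts is not inadvertently bounded by a $2$-face of $M$ inside the chosen induced subcomplex. This delicate interplay of neighbourliness, three-dimensionality, and the local combinatorics at $v$ is where I expect the technical heart of the proof to lie.
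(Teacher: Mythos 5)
Your treatment of neighbourliness and $\FF$-orientability is fine (the long exact sequence argument for the antistar is essentially the standard one), and your overall architecture --- prove every vertex link is a stacked $2$-sphere, then pass to global stackedness, with a Kuratowski-type contradiction somewhere --- matches the paper in outline. But the core of the theorem is exactly the step you leave open, and your plan for it would not go through as described. The paper does not extract a non-planar graph from a non-stacked link $L_v$ directly. Instead it uses heavy machinery from the authors' earlier work: for an $\FF$-tight complex, the induced subcomplex on the vertices of $x\ast C$, for $C$ an induced cycle in the link of $x$, is a neighbourly closed $2$-manifold; hence links have no induced $n$-cycles with $n\equiv 1\pmod 3$; hence, by a substantial combinatorial theorem, every vertex link is a connected sum of copies of $\mathcal{T}$ and $\mathcal{I}$. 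The icosahedral summand is then killed by the corollaries that induced $3$-cycles in links bound triangles of $M$ and induced $5$-cycles bound $5$-vertex M\"obius bands, which plant a homeomorph of $K_{3,3}$ not in $L_v$ but in the link of a \emph{different} vertex (the centre of the would-be icosahedron). This explains why your octahedron experiments stall: the octahedron is excluded much earlier, by its induced $4$-cycles ($4\equiv 1\pmod 3$), not by any Kuratowski obstruction inside $L_v$ itself. Without substitutes for these intermediate results, ``non-stacked link $\Rightarrow$ $K_5$ or $K_{3,3}$ somewhere'' is a hope, not an argument, and you flag it yourself as the unresolved ``chief obstacle.''

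Two further gaps. First, the bridge from locally stacked to stacked is not a known characterization of closed stacked $3$-manifolds by their vertex links alone: in dimension $3$ locally stacked is strictly weaker than stacked, and the paper needs tightness again here, citing the result that a locally stacked $\FF$-tight closed $3$-manifold is stacked. Second, in the sufficiency direction your description of $\Delta$ as built from one $4$-simplex by repeatedly gluing $4$-simplices along boundary facets only produces stacked balls, whose boundaries are stacked $3$-spheres; for the manifolds actually at stake ($(S^2\times S^1)^{\#k}$ and its twisted analogue) $\Delta$ is a stacked $4$-manifold obtained with handle additions, and the tightness of neighbourly, $\FF$-orientable, stacked manifolds is a genuine theorem in the literature (which the paper simply cites), not a routine Mayer--Vietoris induction over induced subcomplexes. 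Citing that result, as you half-suggest, is the right move; sketching a fresh induction without addressing the handle additions is not.
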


The special case of Theorem \ref{theo:main}, where ${\rm char}(\FF)\neq 2$, was proved in our previous paper
\cite{BDS1}. In this paper we conjectured \cite[Conjecture 1.12]{BDS1} the validity of Theorem
\ref{theo:main} in general.

As a consequence of Theorem \ref{theo:main}, we show that the K\"{u}hnel-Lutz conjecture (Conjecture \ref{conj:KL})
is valid up to dimension 3. Thus,

\begin{corollary} \label{coro:KL}
If $M$ is an $\FF$-tight triangulated closed manifold of dimension $\leq 3$, then $M$ is strongly minimal.
\end{corollary}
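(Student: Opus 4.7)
The plan is to prove the corollary dimension by dimension, each time translating the $\FF$-tight hypothesis into combinatorial constraints strong enough to force minimality of the whole face vector.

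In dimensions $0$ and $1$ the statement is immediate: the only connected triangulated closed $0$-manifold is a single vertex, and the only $\FF$-tight triangulated closed $1$-manifold is $\partial\Delta^2=C_3$, each being the minimum-vertex triangulation of its geometric carrier. In dimension $2$ an $\FF$-tight $M$ is (as recalled in the abstract) $\FF$-orientable and neighbourly. Euler's formula together with the Dehn--Sommerville relation $3f_2=2f_1$ gives $f_1=3(f_0-\chi)$ and $f_2=2(f_0-\chi)$, so every face number is monotone in $f_0$. Combining this with the trivial inequality $f_1\le\binom{f_0}{2}$ yields the Heawood lower bound on $f_0$, which $M$ attains by virtue of being neighbourly; hence $f_0(M)\le f_0(Y)$, and consequently $f_i(M)\le f_i(Y)$, for every triangulation $Y$ of $|M|$.

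The main case is dimension $3$, where Theorem~\ref{theo:main} supplies the structure: an $\FF$-tight triangulated closed $3$-manifold $M$ is $\FF$-orientable, neighbourly and stacked. Since $\chi(M)=0$, the Dehn--Sommerville relations reduce the face vector to $f_0$ and $f_1$ via $f_2=2(f_1-f_0)$ and $f_3=f_1-f_0$, so it suffices to prove $f_0(Y)\ge f_0(M)$ and $f_1(Y)-f_0(Y)\ge f_1(M)-f_0(M)$ for any triangulation $Y$ of $|M|$. The key ingredient I would invoke is the generalized lower bound inequality $f_1\ge 4f_0-10+10\beta_1$ for triangulated closed $3$-manifolds (with $\beta_1$ computed over $\FF$), together with its equality case identifying neighbourly stacked $3$-manifolds as extremal. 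Granting this, the neighbourly-plus-stacked hypotheses on $M$ rearrange to $(f_0(M)-4)(f_0(M)-5)=20\beta_1$, while for $Y$ the LBT paired with $f_1(Y)\le\binom{f_0(Y)}{2}$ yields $(f_0(Y)-4)(f_0(Y)-5)\ge 20\beta_1$. Since $x\mapsto(x-4)(x-5)$ is monotone on integers $x\ge 5$ and every triangulated closed $3$-manifold has at least $5$ vertices, this forces $f_0(Y)\ge f_0(M)$; the LBT applied once more then gives $f_1(Y)-f_0(Y)\ge 3f_0(Y)-10+10\beta_1\ge 3f_0(M)-10+10\beta_1=f_1(M)-f_0(M)$, completing the inequality chain.

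The principal obstacle is securing this $\beta_1$-corrected LBT for closed $3$-manifolds (with its equality case) over every field $\FF$. For characteristic different from $2$ it is covered by results in \cite{BDS1}, and in characteristic $2$ it should be accessible via the same combinatorial machinery that underlies Theorem~\ref{theo:main}. Once that ingredient is in place, the remainder is a routine manipulation of the Dehn--Sommerville relations and the bound $f_1\le\binom{f_0}{2}$.
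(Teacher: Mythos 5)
Your treatment of dimensions $\leq 2$ coincides with the paper's: neighbourliness plus the relations $f_1=3(f_0-\chi)$, $f_2=2(f_0-\chi)$ and $f_1\le\binom{f_0}{2}$ is exactly how the paper handles $d=1,2$. In dimension $3$, however, you take a genuinely different route. The paper disposes of this case in one line: by Theorem \ref{theo:main} $M$ is stacked, hence locally stacked, and then it cites Corollary 3.13 of \cite{BDStellated}, which states that every locally stacked $\FF$-tight triangulated closed manifold is strongly minimal. You instead make the minimality mechanism explicit: you use Dehn--Sommerville to reduce to $f_0$ and $f_1-f_0$, the Novik--Swartz lower bound $f_1\ge 4f_0-10+10\beta_1(\,\cdot\,;\FF)$ for competing triangulations $Y$, and the equality $f_1(M)=4f_0(M)-10+10\beta_1$ coming from stackedness (via the handle decomposition of \cite{DM}, as in the paper's proof of Corollary \ref{coro:S2S1}) together with neighbourliness to get $(f_0(M)-4)(f_0(M)-5)=20\beta_1$; the monotonicity argument then closes the inequality chain correctly. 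This is sound, and in fact your ``principal obstacle'' is not an obstacle at all: the $\beta_1$-corrected lower bound over \emph{every} field is precisely the Novik--Swartz theorem \cite{NS} (stated for all $\FF$ in the paper's introduction; note that any triangulation $Y$ of $|M|$ is automatically $\FF$-orientable because $M$ is), and the equality case characterization is \cite{BaMu}, not \cite{BDS1}. So your argument is complete once you cite \cite{NS} and \cite{BaMu}/\cite{DM} instead of hoping to rebuild the characteristic-$2$ case from the machinery of Theorem \ref{theo:main}. What the two approaches buy: the paper's citation of \cite{BDStellated} is shorter and covers the locally stacked case directly, while your argument is more self-contained at the level of face-vector arithmetic and makes transparent exactly which lower bound theorem forces strong minimality, essentially re-proving the relevant special case of \cite{BDStellated} from \cite{NS}.
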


As a second consequence of Theorem \ref{theo:main}, we show:

\begin{corollary} \label{coro:S2S1}
The only closed topological $3$-manifolds which may possibly have $\FF$-tight triangulations are $S^3$,
$(\TPSS)^{\# k}$ and $(S^2\times S^1)^{\# k}$, where $k$ is a positive integer such that $80k+1$ is a perfect
square.
\end{corollary}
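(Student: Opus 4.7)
The plan is to combine Theorem~\ref{theo:main} with a topological classification of stacked closed 3-manifolds and a sharp $g_2$-identity for the stacked case.

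By Theorem~\ref{theo:main}, any $\FF$-tight triangulated closed 3-manifold $M$ is $\FF$-orientable, neighbourly, and stacked. Invoking Walkup's classification of stacked closed 3-manifolds \cite{Wa}, $M$ is then homeomorphic to a finite connected sum of copies of $S^2\times S^1$ and $\TPSS$, with the empty connected sum giving $S^3$. The classical 3-manifold identity $(S^2\times S^1)\#\TPSS\cong\TPSS\#\TPSS$, valid because $\TPSS$ contains an orientation-reversing loop, lets one collapse every mixed connected sum to either a pure $(S^2\times S^1)^{\#k}$ or a pure $(\TPSS)^{\#k}$, yielding the three topological families appearing in the corollary.

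For the arithmetic condition on $k$, observe that any neighbourly closed 3-manifold on $n$ vertices satisfies
\[
g_2(M)\;:=\;f_1(M)-4f_0(M)+10\;=\;\binom{n}{2}-4n+10\;=\;\binom{n-4}{2}.
\]
For a stacked closed 3-manifold $M=\partial\Delta$ the identity $g_2(M)=10\,\beta_1(M;\FF)$ should be proved by induction on Walkup's construction of the stacked 4-manifold $\Delta$ from a single 4-simplex (base case $g_2=\beta_1=0$): each 4-simplex stacking adds one vertex and four edges to $\partial\Delta$ while leaving $\beta_1$ unchanged (so $g_2$ is unchanged), whereas each handle identification of two disjoint boundary 3-faces removes $4$ vertices and $6$ edges while incrementing $\beta_1$ by $1$, so $g_2$ changes by $-6-4(-4)=10$. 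Since $\beta_1(M;\FF)=k$ in both non-trivial families (over any $\FF$ for $(S^2\times S^1)^{\#k}$, and over $\FF$ of characteristic~$2$ for $(\TPSS)^{\#k}$, which is the only case in which the latter is $\FF$-orientable), we obtain $\binom{n-4}{2}=10k$, equivalently $(2n-9)^2=80k+1$. This has an integer solution $n=(9+\sqrt{80k+1})/2$ precisely when $80k+1$ is a perfect square, and the case $k=0$ recovers $S^3$ on $n=5$ vertices.

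The main obstacle I foresee is establishing the sharp equality $g_2(M)=10\,\beta_1(M;\FF)$ uniformly in $\FF$, in particular for the non-orientable family $(\TPSS)^{\#k}$ in characteristic~$2$, since the standard Kalai/Novik--Swartz rigidity framework is traditionally formulated in the $\FF$-orientable setting. The inductive argument sketched above is entirely combinatorial and bypasses this issue, but one still needs to check that each handle identification in Walkup's construction induces exactly the claimed face-vector changes (no spurious further identifications, which follows from the simplicial structure of $M$) and that the $\FF$-Betti number really does grow by $1$ under each handle addition, via a routine Mayer--Vietoris computation.
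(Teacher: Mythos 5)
Your proposal is essentially the paper's own argument: tightness gives stackedness via Theorem~\ref{theo:main}, the stacked manifold is decomposed by elementary handle additions starting from a stacked $3$-sphere (giving the three topological families), and the arithmetic condition comes from tracking vertices and edges through that construction together with neighbourliness; your inductive identity $g_2(M)=10\,\beta_1(M;\FF)$ is just a repackaging of the paper's count $f_0(S)=f_0(X)+4k$, $f_1(S)=f_1(X)+6k$, $f_1(S)=4f_0(S)-10$. The one point that needs fixing is the attribution of the key classification step. The statement you need --- every \emph{stacked} closed $3$-manifold (boundary of a stacked $4$-manifold in the Murai--Nevo sense) is obtained from a stacked $3$-sphere by elementary handle additions, i.e.\ lies in Walkup's class $\mathcal{H}(3)$ --- is not in Walkup \cite{Wa}; that notion of stackedness postdates his paper, and his results on \emph{locally} stacked $3$-manifolds cannot substitute for it, since local stackedness is strictly weaker than stackedness in dimension $3$ (for instance the join of two $3$-cycles is a locally stacked $3$-sphere with $g_2=1$, hence not stacked and not in $\mathcal{H}(3)$). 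The paper invokes Corollary 3.13 (case $d=3$) of Datta--Murai \cite{DM} for precisely this step, and your proof should cite that (or prove it) instead; with that substitution the rest of your argument, including the reduction of mixed connected sums via $(S^2\times S^1)\,\#\,\TPSS\cong\TPSS\,\#\,\TPSS$ and the computation $(2f_0-9)^2=80k+1$, goes through.
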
 

K\"uhnel conjectured that any 
triangulated closed 3-manifold $M$ satisfies $(f_0 (M)-4)\times$ $(f_0(M)-5)\geq 20 
\beta_1(M; \FF)$. (This is a part of his Pascal-like triangle of conjectures 
reported in \cite{Lu}.) This bound was proved by Novic and Swartz in \cite{NS}. Burton et al proved in \cite{BDSS1} that if the equality holds in 
this inequality then $M$ is neighbourly and locally stacked. (Actually, these authors stated this result for $\FF=\ZZ_2$, but their argument goes through for 
all fields $\FF$.) In \cite{BaMu}, the first author proved that the equality holds in 
this inequality if and only if $M$ is neighbourly and stacked. In \cite{Mu}, Murai generalized this to all dimensions $\geq 3$. Another consequence of Theorem \ref{theo:main} is:

\begin{corollary} \label{coro:Ku}
A triangulated closed $3$-manifold $M$ is $\FF$-tight if and only 
if $M$ is $\FF$-orientable and $(f_0(M)-4)(f_0(M)-5) = 20\beta_1(M; \FF)$.
\end{corollary}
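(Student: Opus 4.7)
The plan is to derive Corollary \ref{coro:Ku} essentially as a formal consequence of Theorem \ref{theo:main} combined with the Bagchi-Murai characterization of equality in the Novik-Swartz inequality that is summarized in the paragraph immediately preceding the corollary. Since that paragraph states: (i) the Novik-Swartz bound $(f_0(M)-4)(f_0(M)-5) \geq 20\beta_1(M;\FF)$ holds for every triangulated closed 3-manifold $M$, and (ii) equality holds in this inequality if and only if $M$ is neighbourly and stacked, the corollary amounts to substituting the condition ``neighbourly and stacked'' in Theorem \ref{theo:main} by the numerical equality.

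First I would handle the forward direction. Assume $M$ is $\FF$-tight. By Theorem \ref{theo:main}, $M$ is $\FF$-orientable, neighbourly, and stacked. Invoking the Bagchi-Murai equality criterion, this implies $(f_0(M)-4)(f_0(M)-5) = 20\beta_1(M;\FF)$, as required.

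For the converse, assume $M$ is $\FF$-orientable and $(f_0(M)-4)(f_0(M)-5) = 20\beta_1(M;\FF)$. Apply the Bagchi-Murai criterion in the other direction to conclude that $M$ is neighbourly and stacked. Combined with the assumed $\FF$-orientability, Theorem \ref{theo:main} now yields that $M$ is $\FF$-tight.

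There is no real obstacle here: all the substantive content lives in Theorem \ref{theo:main} (which is the main result of the paper, asserted above to be proved) and in the Bagchi-Murai theorem (quoted from \cite{BaMu}). The only thing to double-check while writing out the proof is that both directions of the Bagchi-Murai equivalence are available over an arbitrary field $\FF$, so that no case distinction on $\mathrm{char}(\FF)$ is needed; if the reference proves it only over a specific field, one would note that the argument is characteristic-independent, in analogy with the remark about \cite{BDSS1} already made in the preceding paragraph.
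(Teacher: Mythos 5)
Your proposal is correct, and for the ``if'' direction it coincides with the alternative route the paper itself mentions: equality plus the quoted theorem of \cite{BaMu} gives ``neighbourly and stacked'', and then Theorem \ref{theo:main} (equivalently, Theorem 2.24 of \cite{BDStacked}) gives $\FF$-tightness. The difference is in the ``only if'' direction. The paper does \emph{not} invoke the ``neighbourly and stacked $\Rightarrow$ equality'' half of the \cite{BaMu} theorem there; instead it reads the equality off directly from the handle-addition computation in the proof of Corollary \ref{coro:S2S1}: writing the stacked triangulation $M$ as the boundary obtained from a stacked $3$-sphere by $k$ elementary handle additions, one gets $(f_0(M)-4)(f_0(M)-5)=20k$ with $k=\beta_1(M;\FF)$. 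Your version outsources that computation to the external reference, which is perfectly legitimate given how the paper quotes that result, but it makes the corollary depend on both directions of the \cite{BaMu} equivalence rather than only one; the paper's route is marginally more self-contained. Your caveat about characteristic-independence is well placed, and the paper's parenthetical remark about \cite{BDSS1} shows the authors share the concern; note also that the statement $(f_0-4)(f_0-5)=20\beta_1$ for a stacked neighbourly $M$ is insensitive to $\FF$ since $H_1$ of $(S^2\times S^1)^{\# k}$ and of $(\TPSS)^{\# k}$ is free of rank $k$. One small correction of attribution: the equality characterization is due to Bagchi alone (\cite{BaMu} is a single-author paper; the ``Mu'' in the citation key is not Murai), with Murai's higher-dimensional generalization appearing separately in \cite{Mu}.
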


In \cite{BDSS2}, $\ZZ_2$-tight triangulations of $(\TPSS)^{\# k}$ were constructed for $k = 1, 30, 99, 208, 357$ and 546. However, we do not know any $\FF$-tight triangulations of $(S^2\times S^1)^{\#
k}$.

\begin{question} \label{question:S2S1}
Is there any positive integer $k$ for which $(S^2\times S^1)^{\# k}$ has an $\FF$-tight triangulation?
\end{question}

\section{Proofs}

The following result is Theorem 3.5 of \cite{BDS1}.

\begin{theorem} \label{theo:BDS1_T3.5}
Let $C$ be an induced cycle in the link $S$ of a vertex $x$ in an $\FF$-tight simplicial complex $X$. Then the induced subcomplex of $X$ on the vertex set of the cone $x\ast C$ is a neighbourly triangulated closed $2$-manifold.
\end{theorem}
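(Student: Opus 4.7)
The plan is to combine Mayer--Vietoris for the decomposition $W = Z \cup (x \ast C)$ with the double injectivity afforded by $\FF$-tightness, and then carry out a structural analysis. Set $Z := X[V(C)]$ and $W := X[V(C) \cup \{x\}]$. Since $C$ is induced in $S = \lk{X}{x}$, one has $\lk{W}{x} = S[V(C)] = C$, so $W = Z \cup (x \ast C)$ with $Z \cap (x \ast C) = C$. Neighbourliness of $W$ is then free: for any $u, v \in V(W)$, tightness applied to $X[\{u, v\}]$ forces this 2-vertex induced subcomplex to be connected (otherwise its $H_0 \cong \FF^2$ cannot inject into $H_0(X; \FF) = \FF$), so $\{u, v\}$ is an edge of $X$ and hence of $W$.

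Next, applying reduced Mayer--Vietoris to $W = Z \cup (x \ast C)$ and using contractibility of the cone produces the long exact sequence
\begin{equation*}
\cdots \to H_n(C; \FF) \to H_n(Z; \FF) \to H_n(W; \FF) \to H_{n-1}(C; \FF) \to \cdots .
\end{equation*}
Tightness of $X$ supplies injections $H_1(Z; \FF) \hookrightarrow H_1(X; \FF)$ and $H_1(W; \FF) \hookrightarrow H_1(X; \FF)$. Since the first factors through the second, the map $H_1(Z; \FF) \to H_1(W; \FF)$ is injective. Combined with its Mayer--Vietoris surjectivity (which follows from connectedness of $Z \supseteq C$, so that $H_0(C) \to H_0(Z) \oplus H_0(x \ast C)$ is injective), this map is an isomorphism. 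Exactness then forces the inclusion-induced map $H_1(C; \FF) \to H_1(Z; \FF)$ to vanish, so the fundamental cycle of $C$ bounds a 2-chain $\mu$ supported on triangles of $Z$; combined with $x \ast C$ this yields a 2-cycle $\xi$ of $W$ whose class generates the extra dimension in $H_2(W; \FF)$ given by the exact sequence above.

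The main obstacle is the remaining structural step: showing that $W$ is a $2$-dimensional closed manifold. My plan is (i) to rule out simplices of $W$ of dimension $\geq 3$ by arguing that any such simplex would yield a non-liftable homology class in a carefully chosen induced subcomplex, contradicting tightness; (ii) to show that every edge of $W$ lies in exactly two $2$-faces by tightly controlling the support of $\mu$ through further Mayer--Vietoris $+$ tightness arguments applied to induced subcomplexes $X[A]$ for $A \subsetneq V(C)$; and (iii) to verify that each vertex link $\lk{W}{v_i}$ is a cycle by repeating the Mayer--Vietoris $+$ tightness argument at $v_i$ in place of $x$, using the neighbourliness established above. The hardest step is (ii), namely precisely characterising which triangles of $X$ with vertex set in $V(C)$ contribute to $\mu$. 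I expect Kuratowski's planarity criterion, flagged in the paper's introduction, to enter here: it should constrain the $1$-skeleton of the support of $\mu$ to be planar, forcing $\mu$ to be (the fundamental chain of) a triangulated disk which glues to $x \ast C$ along their common boundary $C$ to form the desired neighbourly triangulated closed $2$-manifold.
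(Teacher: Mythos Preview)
This statement is not proved in the present paper: it is quoted, without proof, as Theorem~3.5 of \cite{BDS1}. So there is no in-paper argument to compare your proposal against.

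On the merits of your sketch: the opening homological portion (neighbourliness from tightness on two-vertex induced subcomplexes, the Mayer--Vietoris decomposition $W=Z\cup(x\ast C)$, injectivity of $H_1(Z;\FF)\to H_1(W;\FF)$ via the factoring through $H_1(X;\FF)$, and hence the vanishing of $H_1(C;\FF)\to H_1(Z;\FF)$) is correct. The structural plan (i)--(iii), however, has a genuine gap. Your expectation in (ii) that the bounding $2$-chain $\mu$ must be ``the fundamental chain of a triangulated disk'' is simply false: when $C$ is a $5$-cycle and $\FF=\ZZ_2$, Corollary~\ref{coro:C2.3}\,(b) of this very paper shows that $Z=X[V(C)]$ is the $5$-vertex M\"obius band and $W\cong\mathbb{RP}^2_6$; here the unique $\mu$ is the mod~$2$ sum of the five triangles of the M\"obius band, certainly not a disk, and its edge graph is $K_5$, which is not planar. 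So a Kuratowski-type planarity argument cannot force $\mu$ to be a disk, and indeed Kuratowski's criterion is invoked in this paper only in the proof of Theorem~\ref{theo:main}, not for the present statement. Step~(iii) is also circular as written: to ``repeat the argument at $v_i$'' you would need $\lk{W}{v_i}$ to already be an induced cycle in $\lk{X}{v_i}$, which is precisely the kind of manifold-structure information you are trying to establish.
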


If, in Theorem \ref{theo:BDS1_T3.5}, $C$ is an $n$-cycle then the triangulated 2-manifold guaranteed by this theorem has $n+1$ vertices, $n(n+1)/2$ edges and hence $n(n+1)/3$ triangles. Thus 3 divides $n(n+1)$, i.e., $n \not\equiv 1$ (mod 3). Therefore, Theorem \ref{theo:BDS1_T3.5} has the following immediate consequence.

\begin{corollary} \label{coro:C2.2}
Let $X$ be an $\FF$-tight simplicial complex. Let $S$ be the link of a vertex in $X$. Then $S$ has no induced $n$-cycle for $n \equiv 1$ $($mod $3)$.
\end{corollary}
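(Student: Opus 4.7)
The plan is simply to formalize the counting argument sketched in the paragraph preceding the corollary. I would argue by contradiction: suppose $S$ is the link of some vertex $x$ in $X$, and suppose $S$ contains an induced $n$-cycle $C$ with $n \equiv 1 \pmod 3$.

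First I would apply Theorem \ref{theo:BDS1_T3.5} to $C$. This produces a neighbourly triangulated closed $2$-manifold $M$, namely the induced subcomplex of $X$ on the vertex set $V(x \ast C) = V(C) \sqcup \{x\}$. In particular $f_0(M) = n+1$, and neighbourliness gives $f_1(M) = \binom{n+1}{2} = n(n+1)/2$.

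Next I would invoke the standard incidence relation for closed $2$-manifolds: every edge lies in exactly two triangles, while every triangle has three edges, so $2 f_1(M) = 3 f_2(M)$. Substituting gives $f_2(M) = n(n+1)/3$. Since $f_2(M)$ must be a nonnegative integer, $3$ must divide $n(n+1)$, and as $\gcd(n, n+1) = 1$, this forces $n \equiv 0$ or $n \equiv 2 \pmod 3$, contradicting $n \equiv 1 \pmod 3$.

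There is no real obstacle here: once Theorem \ref{theo:BDS1_T3.5} is in hand, the argument is a one-line divisibility check on the face vector of a neighbourly closed triangulated surface. The only thing worth being careful about is making sure the induced subcomplex on $V(x \ast C)$ is indeed $2$-dimensional with the stated vertex count (which is immediate from $x \notin V(C) \subseteq V(S)$), so that the relation $2f_1 = 3f_2$ applies.
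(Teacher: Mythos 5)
Your proof is correct and is exactly the argument the paper gives in the paragraph preceding the corollary: apply Theorem \ref{theo:BDS1_T3.5} to get a neighbourly closed $2$-manifold on $n+1$ vertices, then use $2f_1 = 3f_2$ to force $3 \mid n(n+1)$, which rules out $n \equiv 1 \pmod 3$. No differences worth noting.
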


We recall that the M\"{o}bius band has a unique 5-vertex triangulation $\mathcal{M}$. The boundary of $\mathcal{M}$ is a 5-cycle $C_5$. The simplicial complex $\mathcal{M}$ may be uniquely recovered from $C_5$ as follows. The triangles of $\mathcal{M}$ are $\{x\}\cup e_x$, where, for each vertex $x$ of $C_5$, $e_x$ is the edge of $C_5$ opposite to $x$. We also note the following consequence of Theorem \ref{theo:BDS1_T3.5}.

\begin{corollary} \label{coro:C2.3}
Let $S$ be the link of a vertex $x$ in an $\FF$-tight simplicial complex $X$. Let $C$ be an induced cycle in $S$.
\begin{enumerate}
\item[{\rm (a)}] If $C$ is a $3$-cycle, then it bounds a triangle of $X$. \vspace{-2.5mm}
\item[{\rm (b)}] If $C$ is a $5$-cycle then it bounds an induced subcomplex of $X$ isomorphic to the $5$-vertex M\"{o}bius band.
\end{enumerate}
\end{corollary}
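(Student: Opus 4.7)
The plan is to apply Theorem~\ref{theo:BDS1_T3.5} to the induced cycle $C$ and determine the combinatorial type of the resulting neighbourly closed $2$-manifold $M_2 := X[V(C)\cup\{x\}]$. If $C$ has $n$ vertices, then $M_2$ has $f$-vector $(f_0,f_1,f_2) = \bigl(n+1,\binom{n+1}{2},\tfrac{n(n+1)}{3}\bigr)$, computed from neighbourliness and the standard identity $3f_2 = 2f_1$ for closed surfaces.

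For part~(a), taking $n=3$ yields $(4,6,4)$ and $\chi = 2$, so $|M_2|\cong S^2$, which admits a unique $4$-vertex triangulation, namely $\partial\Delta^3$. In particular, every $3$-subset of $V(M_2)$ is a face of $M_2\subseteq X$; the $3$-subset $V(C)$ is then the triangle bounded by $C$.

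For part~(b), taking $n=5$ yields $(6,15,10)$ and $\chi = 1$, so $|M_2| \cong \mathbb{RP}^2$, which admits a unique $6$-vertex triangulation. Since $M_2$ is induced in $X$, we have $X[V(C)] = M_2[V(C)]$, which is precisely the antistar of $x$ in $M_2$. By neighbourliness, $\lk{M_2}{x}$ is a $5$-cycle on $V(C)$; since every edge of $C$ lies in $\lk{X}{x}$ and hence in $\lk{M_2}{x}$, and both are $5$-edge cycles on the same vertex set, $\lk{M_2}{x} = C$. Thus $X[V(C)]$ is a $5$-vertex neighbourly triangulated $2$-manifold with boundary $C$, $f$-vector $(5,10,5)$, and Euler characteristic $0$; being a punctured copy of the non-orientable surface $\mathbb{RP}^2$, it is a Möbius band. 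By the uniqueness of the $5$-vertex triangulation $\mathcal{M}$ of the Möbius band recalled just before the statement, $X[V(C)] \cong \mathcal{M}$.

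I expect no real obstacle: Theorem~\ref{theo:BDS1_T3.5} does all the heavy lifting, and the remaining inputs are the classical classifications of the $4$-vertex triangulation of $S^2$, the $6$-vertex triangulation of $\mathbb{RP}^2$, and the $5$-vertex triangulation of the Möbius band --- all standard combinatorial facts about small triangulated surfaces, the last of which is explicitly invoked in the paragraph preceding the statement.
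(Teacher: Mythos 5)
Your proposal is correct and follows essentially the same route as the paper: apply Theorem~\ref{theo:BDS1_T3.5} to identify $X[V(x\ast C)]$ as the boundary of the tetrahedron (for $n=3$) or as $\mathbb{RP}^2_6$ (for $n=5$), then read off the triangle, respectively the antistar of $x$, which is the $5$-vertex M\"obius band. Your extra details (the Euler characteristic computation and the explicit check that the link of $x$ in the induced surface equals $C$) only make explicit what the paper leaves implicit.
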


\begin{proof}
If $C$ is a 3-cycle, then the induced subcomplex of $X$ on the
vertex set of $x\ast C$ is a neighbourly, 4-vertex, triangulated closed 2-manifold, which must be the boundary complex $\mathcal{T}$ of the tetrahedron. But all four possible triangles occur in $\mathcal{T}$, and $C$ bounds one of them. If $C$ is a 5-cycle then the induced subcomplex $X[V(x\ast C)]$ of $X$ is a neighbourly, 6-vertex, triangulated closed 2-manifold, which must be the unique 6-vertex triangulation $\mathbb{RP}^2_6$ of the real projective plane. Therefore, the induced subcomplex $X[V(C)]$ of $X$ is the antistar of the vertex $x$ in $\mathbb{RP}^2_6$, which is the 5-vertex M\"{o}bius band.
\end{proof}

Let $\mathcal{T}$ and $\mathcal{I}$ denote the boundary complexes of the tetrahedron and the icosahedron, respectively. Thus the faces of $\mathcal{T}$ are all the proper subsets of a set of four vertices. Up to isomorphism, the 20 triangles of $\mathcal{I}$ are as follows:
\begin{align} \label{icosahedron}
& 012, 015, 023, 034, 045, 124^{\prime}, 153^{\prime}, 13^{\prime}4^{\prime}, 235^{\prime}, 24^{\prime}5^{\prime}, 341^{\prime}, \nonumber \\
&  31^{\prime}5^{\prime}, 452^{\prime}, 41^{\prime}2^{\prime}, 52^{\prime}3^{\prime}, 
0^{\prime}1^{\prime}2^{\prime}, 0^{\prime}1^{\prime}5^{\prime}, 0^{\prime}2^{\prime}3^{\prime}, 0^{\prime}3^{\prime}4^{\prime}, 0^{\prime}4^{\prime}5^{\prime}.
\end{align}

The following is Corollary 5.5 of \cite{BDS1}.

\begin{theorem} \label{theo:BDS1_T5.6}
Let $S$ be a triangulated $2$-sphere which has no induced $n$-cycle for any
$n \equiv 1$ $($mod $3)$. Then $S$ is a connected sum of finitely many copies of $\mathcal{T}$ and $\mathcal{I}$ $($in some order$)$.
\end{theorem}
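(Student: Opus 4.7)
I would argue by induction on $f_0(S)$. The base case $f_0(S)=4$ forces $S=\mathcal{T}$. For the inductive step I split into two cases according to whether $S$ contains an \emph{empty triangle}, i.e.\ a $3$-cycle in the $1$-skeleton of $S$ that is not a face of $S$.

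\emph{Case 1: $S$ contains an empty triangle $C$ on vertices $\{x,y,z\}$.} Then $|C|$ is an embedded circle in $|S|\cong S^{2}$ and separates it into two closed $2$-discs $D_1,D_2$ sharing $C$. Let $S_i$ be obtained from $D_i$ by adjoining the $2$-face $\alpha:=\{x,y,z\}$; each $S_i$ is a triangulated $2$-sphere with $f_0(S_i)<f_0(S)$, and $S=S_1\#S_2$ along $\alpha$. For the induction I must check that both $S_i$ inherit the hypothesis. Given an induced cycle $C'$ of $S_i$, any chord of $C'$ in $S\setminus S_i$ must lie in $D_{3-i}\setminus C$, so its two endpoints lie in $V(C')\cap V(D_{3-i})\subseteq V(C)$. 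Since any two vertices of $C$ are already joined by an edge of $C\subset S_i$, such a chord either is already an edge of $S_i$ (contradicting inducedness of $C'$ in $S_i$) or cannot arise at all. Hence the hypothesis descends to $S_1$ and $S_2$, which by induction are connected sums of copies of $\mathcal{T}$ and $\mathcal{I}$; so is $S$.

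\emph{Case 2: every $3$-cycle of $S$ bounds a face.} The link of every vertex $v$ is then induced in $S$, for otherwise two non-adjacent vertices of $\lk{S}{v}$ together with $v$ would span a $3$-cycle not bounding a face. Thus $\lk{S}{v}$ is an induced $\deg(v)$-cycle, and by the hypothesis $\deg(v)\not\equiv 1\pmod{3}$. A vertex $v$ of degree $3$ forces $S=\mathcal{T}$ directly, since its star together with the (necessarily present) face on its link closes into a copy of $\mathcal{T}$, which must be all of $S$ by connectedness. A vertex of degree $4$ is excluded by $4\equiv 1\pmod{3}$. Hence either $S=\mathcal{T}$ or the minimum vertex degree is $\ge 5$.

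In the latter subcase, combining $\sum_v\deg(v)=2f_1(S)=6f_0(S)-12$ with $\deg(v)\ge 5$ gives $\sum_v(\deg(v)-5)=f_0(S)-12$. If every vertex has degree exactly $5$, then $f_0(S)=12$ and the classical uniqueness of the $12$-vertex triangulation of $S^{2}$ with all degrees $5$ yields $S=\mathcal{I}$. The main obstacle is therefore to rule out a vertex $v$ with $\deg(v)\ge 6$. My strategy would be to use the induced link cycle at $v$ (of length in $\{6,8,9,\dots\}$) together with the induced link cycles at its neighbours, and the no-empty-triangle constraint, to trace an induced cycle through the second neighbourhood of $v$ of length $\equiv 1\pmod{3}$ (aiming for an induced $4$- or $7$-cycle), contradicting the hypothesis. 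This local combinatorial case analysis, which exploits the tight interaction between the absence of empty triangles and the forbidden cycle lengths, is the hardest step of the argument.
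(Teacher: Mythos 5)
Your reduction is sound as far as it goes, and it follows the natural skeleton (the paper under review does not prove this statement itself; it imports it as Corollary 5.5 of \cite{BDS1}, where the splitting along empty triangles is likewise the first move). Case 1 is correct: an empty triangle splits $S$ as a connected sum $S_1\#S_2$ of smaller spheres, and your check that the hypothesis descends -- any chord of a cycle of $S_i$ lying in the other disc would have both endpoints on the separating $3$-cycle and hence already be an edge of $S_i$ -- is exactly right. The reduction in Case 2 is also correct: when every $3$-cycle bounds a face, each $\lk{S}{v}$ is an induced $\deg(v)$-cycle, so $\deg(v)\not\equiv 1\pmod 3$, degree $3$ forces $S=\mathcal{T}$, and the $5$-regular case gives $S=\mathcal{I}$.

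However, the proof is not complete, and the missing piece is not a technicality: it is where essentially all of the content of the theorem sits. You must show that a triangulated $2$-sphere in which every $3$-cycle bounds a face, every vertex has degree at least $5$, and no induced cycle has length $\equiv 1\pmod 3$ contains no vertex of degree $\geq 6$. Everything preceding this (the connected-sum surgery, the Euler-characteristic count, the uniqueness of the $5$-regular triangulation of $S^2$) is routine; what you offer for this step is a stated intention -- ``trace an induced $4$- or $7$-cycle through the second neighbourhood of $v$'' -- rather than an argument. The degrees to be excluded are $6,8,9,11,\dots$, and for each one must actually exhibit a forbidden induced cycle using only the local data: that $\lk{S}{v}$ is induced, that each neighbour $u_i$ of $v$ has an induced link cycle passing consecutively through $u_{i-1},v,u_{i+1}$, and that no $3$-cycle is empty. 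The difficulty is that the outer arcs of these link cycles can share vertices in the second neighbourhood in many ways, and each coincidence pattern must be analysed before an induced (chordless) cycle of length $4$ or $7$ can be certified; this case analysis occupies the bulk of Section 5 of \cite{BDS1}. As written, your proposal is a correct reduction of the theorem to its combinatorial core, not a proof of it.
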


As an immediate consequence of Corollary \ref{coro:C2.2} and Theorem \ref{theo:BDS1_T5.6}, we have:

\begin{corollary} \label{coro:C2.5}
Let $S$ be the link of a vertex in an $\FF$-tight triangulated closed $3$-manifold $M$. Then $S$ is a connected sum of finitely many copies of $\mathcal{T}$ and $\mathcal{I}$ $($in some order$)$.
\end{corollary}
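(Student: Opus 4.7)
The plan is to combine the two preceding results (Corollary \ref{coro:C2.2} and Theorem \ref{theo:BDS1_T5.6}) directly; this corollary is essentially their immediate juxtaposition, with one small observation about links in 3-manifolds.

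First, I would observe that since $M$ is a triangulated closed $3$-manifold, the link $S$ of any vertex $x \in V(M)$ is a triangulated closed $2$-manifold; in fact, $|S|$ is homeomorphic to $S^2$ (the standard fact that vertex links in a triangulated $d$-manifold are triangulated $(d-1)$-spheres). So $S$ is a triangulated $2$-sphere, which is precisely the hypothesis needed for Theorem \ref{theo:BDS1_T5.6}.

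Next, I would invoke Corollary \ref{coro:C2.2}: since $M$ is $\FF$-tight and $S$ is the link of a vertex of $M$, the $2$-sphere $S$ contains no induced $n$-cycle for any $n \equiv 1 \pmod{3}$. This supplies the remaining hypothesis of Theorem \ref{theo:BDS1_T5.6}.

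Finally, applying Theorem \ref{theo:BDS1_T5.6} to $S$ yields that $S$ is a connected sum of finitely many copies of $\mathcal{T}$ and $\mathcal{I}$ (in some order), as desired. There is no real obstacle here; the only step worth stating explicitly is the manifold-link observation that converts ``link of a vertex in a triangulated closed $3$-manifold'' into ``triangulated $2$-sphere'', after which the two cited results fit together with nothing left to prove.
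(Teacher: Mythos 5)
Your proof is correct and is exactly the argument the paper intends: the paper presents this corollary as an ``immediate consequence'' of Corollary \ref{coro:C2.2} and Theorem \ref{theo:BDS1_T5.6}, relying on the same standard fact that vertex links in a triangulated closed $3$-manifold are triangulated $2$-spheres. Nothing is missing.
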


\begin{proof}[Proof of Theorem $\ref{theo:main}$]
Let $M$ be an $\FF$-orientable, neighbourly, stacked, triangulated closed 3-manifold. 
Then $M$ is $\FF$-tight by the case $k=1$ of Theorem 2.24 in \cite{BDStacked}. 
This proves the ``if part". Conversely, let $M$ be $\FF$-tight. Since any $\FF$-tight  triangulated closed manifold is neighbourly and $\FF$-orientable (Lemmas 2.2 and 2.5 in \cite{BDS1}), it follows that $M$ is $\FF$-orientable and neighbourly. To complete the proof of the ``only if" part, it suffices to show that $M$ must be stacked. By \cite[Theorem 2.24]{BDStacked}, every locally stacked, $\FF$-tight, triangulated closed 3-manifold is automatically stacked. So, it is enough to show that if $S$ is the link of an arbitrary vertex of $M$, then $S$ is a stacked 2-sphere. By Corollary \ref{coro:C2.5}, $S = S_1\# \cdots\# S_m$, where each $S_i$ is either $\mathcal{T}$ or $\mathcal{I}$. Since any connected sum of copies of $\mathcal{T}$ is stacked (as may be seen by an easy induction on the number of summands), it suffices to show that no $S_i$ can be $\mathcal{I}$. 

Suppose, on the contrary, that $S_i = \mathcal{I}$ for some index $i$. We may take the triangles of $\mathcal{I}$ to be as given in \eqref{icosahedron}. Note that each triangle of $S_i$ is either a triangle of $S$ or its boundary is an induced 3-cycle of $S$. Since $S\subseteq M$, Corollary \ref{coro:C2.3}\,(a) implies that each triangle of $S_i$ is a triangle of $M$. Thus, $\mathcal{I}\subseteq M$. In particular, $012$ and $023$ are triangles of $M$. Also, we have the following induced 5-cycles (among others) in $\mathcal{I}$, and hence in $S$.

\setlength{\unitlength}{2.6mm}

\begin{picture}(50,14)(0,-1)

\thicklines

\put(4,10){\line(1,0){4}} \put(2,7){\line(2,3){2}} \put(10,7){\line(-2,3){2}} \put(6,3){\line(-1,1){4}}
\put(6,3){\line(1,1){4}}

\put(3,10.5){\mbox{0}} \put(8.5,10.5){\mbox{2}} \put(1.3,5.5){\mbox{5}}
 \put(10,7.5){\mbox{$4^{\prime}$}} \put(6.5,2.2){\mbox{$3^{\prime}$}}

\put(16,10){\line(1,0){4}} \put(14,7){\line(2,3){2}} \put(22,7){\line(-2,3){2}} \put(18,3){\line(-1,1){4}} \put(18,3){\line(1,1){4}}

\put(15,10.5){\mbox{0}} \put(20.5,10.5){\mbox{1}} \put(13.3,5.5){\mbox{3}}
 \put(22,7.5){\mbox{$4^{\prime}$}} \put(18.5,2.2){\mbox{$5^{\prime}$}}




\put(28,10){\line(1,0){4}} \put(26,7){\line(2,3){2}} \put(34,7){\line(-2,3){2}} \put(30,3){\line(-1,1){4}} \put(30,3){\line(1,1){4}}

\put(27,10.5){\mbox{0}} \put(32.5,10.5){\mbox{3}} \put(25.3,5.5){\mbox{5}}
 \put(34,7.5){\mbox{$1^{\prime}$}} \put(30.5,2.2){\mbox{$2^{\prime}$}}

\put(40,10){\line(1,0){4}} \put(38,7){\line(2,3){2}} \put(46,7){\line(-2,3){2}} \put(42,3){\line(-1,1){4}} \put(42,3){\line(1,1){4}}

\put(39,10.5){\mbox{0}} \put(44.5,10.5){\mbox{1}} \put(37.3,5.5){\mbox{4}}
 \put(46,7.5){\mbox{$3^{\prime}$}} \put(42.5,2.2){\mbox{$2^{\prime}$}}

\put(2,0){\mbox{\bf Figure\,1\,: Some induced 5-cycles in \boldmath{$\mathcal{I}$}}}
\end{picture}

Hence Corollary \ref{coro:C2.3}\,(b) gives us eight more triangles of $M$ through the vertex 0, namely, $023^{\prime}$, $03^{\prime}4^{\prime}$, $015^{\prime}$, $034^{\prime}$, $04^{\prime}5^{\prime}$, $032^{\prime}$, $012^{\prime}$, 
$02^{\prime}3^{\prime}$.  Thus, if $S^{\prime}$ is the link  of the vertex 0 in $M$, then we have the graph of Fig. 2 as a subcomplex of $S^{\prime}$. 


\setlength{\unitlength}{3.55mm}
\begin{picture}(20,13)(0,0)

\thicklines

\put(3,3){\line(0,1){8}} \put(11,3){\line(-1,1){8}} \put(19,3){\line(-1,1){8}} \put(19,3){\line(-2,1){16}}
\put(19,3){\line(0,1){8}}

\put(3,3){\line(2,1){5}} \put(8.8,5.9){\line(2,1){1.7}} \put(11.4,7.2){\line(2,1){1.7}}
\put(19,11){\line(-2,-1){5}}

\put(11,3){\line(0,1){3.5}} \put(11,7.5){\line(0,1){3.5}}

\put(3,3){\line(1,1){3.7}} \put(7.15,7.15){\line(1,1){1}} \put(8.6,8.6){\line(1,1){2.4}}

\put(15.3,7.3){\line(1,1){3.7}} \put(13.85,5.85){\line(1,1){1}} \put(11,3){\line(1,1){2.4}}

\put(1.7,10.7){\mbox{1}} \put(1.7,6.5){\mbox{$5^{\prime}$}} \put(1.7,2.5){\mbox{$4^{\prime}$}}

\put(9.5,10.7){\mbox{3}} \put(19.6,10.7){\mbox{$3^{\prime}$}} \put(9.5,2.5){\mbox{2}} \put(19.6,2.5){\mbox{$2^{\prime}$}}

\put(2.7,2.7){\mbox{$\bullet$}} \put(2.7,6.7){\mbox{$\bullet$}} \put(2.7,10.7){\mbox{$\bullet$}}

\put(10.7,2.7){\mbox{$\bullet$}} \put(10.7,10.7){\mbox{$\bullet$}}

\put(18.7,2.7){\mbox{$\bullet$}} \put(18.7,10.7){\mbox{$\bullet$}}

 \put(2,0.5){\mbox{\bf Figure\,2\,: A homeomorph of \boldmath{$K_{3,3}$}}}
\end{picture}

So we have a homeomorph of $K_{3,3}$ as a subcomplex of the triangulated 2-sphere $S^{\prime}$. This is a contradiction since $K_{3,3}$ is not a planar graph. 
\end{proof}

\begin{proof}[Proof of Corollary $\ref{coro:KL}$]
Let $M$ be an $\FF$-tight triangulated closed $d$-manifold, $d\leq 3$. By Lemma 2.2 in \cite{BDS1}, $M$ is neighbourly. But the boundary complex of the triangle is the only neighbourly triangulated closed 1-manifold. This is trivially the strongly minimal triangulation of $S^1$. So, we have the result for $d=1$. Next let $d=2$. Let $N$ be another triangulation of $|M|$. Let $(f_0, f_1, f_2)$ be the face vector of $N$. Let $\chi$ be the Euler characteristic of $M$ (hence also of $N$). Then $f_0-f_1+f_2 = \chi$ and $2f_1=3f_2$. Therefore we get $f_1=3(f_0-\chi)$ and $f_2=2(f_0-\chi)$. Thus, $f_1$ and $f_2$ are strictly increasing functions of $f_0$. 
So, it is sufficient to show that $f_0 \geq f_0(M)$. 
Now, trivially, $f_1\leq \binom{f_0}{2}$, with equality if and only if $N$ is neighbourly. Substituting $f_1=3(f_0-\chi)$ in this inequality, we get that $f_0(f_0-7)\geq -6\chi= f_0(M)(f_0(M)-7)$. This implies that $f_0\geq f_0(M)$. Thus, $M$ is strongly minimal. So we have the result for $d=2$. If $d=3$ then, by Theorem \ref{theo:main}, $M$ is stacked and hence is locally stacked. But any locally stacked, $\FF$-tight triangulated closed manifold is strongly minimal by Corollary 3.13 in \cite{BDStellated}. So, we are done when $d=3$.  \end{proof}

\begin{proof}[Proof of Corollary $\ref{coro:S2S1}$]
Let $M$ be a closed 3-manifold which has an $\FF$-tight triangulation $X$. 
By Theorem \ref{theo:main}, $X$ is stacked. But, by Corollary 3.13  (case $d=3$) of \cite{DM}, any stacked triangulation of a closed 3-manifold can be obtained from a stacked 3-sphere by a finite sequence of elementary handle additions.   
It is easy to see by an induction on the number $k$ of handles added that $X$ triangulates either $S^3$ ($k=0$) or $(S^2\times S^1)^{\# k}$ or 
$(\TPSS)^{\# k}$ ($k\geq 1$). Let $X$ be obtained from the stacked 3-sphere $S$ by  $k$ elementary handle additions. It follows by induction on $k$ that $f_0(S) = f_0(X)+4k$ and $f_1(S) = f_1(X)+6k = \binom{f_0(X)}{2}+6k$. Since $S$ is a stacked 3-sphere, $f_1(S) = 4f_0(S) -10$. Thus, $\binom{f_0(X)}{2}+6k = 4(f_0(X) +4k) -10$. This implies $(f_0(X)-4)(f_0(X) -5) =20k$ and hence $f_0(X) = \frac{1}{2}(9+\sqrt{80k+1})$. So, $80k+1$ must be a perfect square. 
\end{proof} 

\begin{proof}[Proof of Corollary $\ref{coro:Ku}$]
If $(f_0(M)-4)(f_0(M)-5) = 20 \beta_1(M; \FF)$ then Theorem 1.3 of \cite{BDSS1} says that $M$ must be neighbourly and locally stacked. Therefore, the `if part' follows from  Theorem 2.24 of \cite{BDStacked}. The `if part' also follows from Theorem 1.12 of \cite{BaMu} and Theorem 2.24 of \cite{BDStacked}. The `only if' part follows from the proof of Corollary \ref{coro:S2S1} above.
\end{proof} 

\medskip

\noindent {\bf Acknowledgements:} 
The second and third authors are supported by  DIICCSRTE, Australia (project
AISRF06660) and DST, India (DST/INT/AUS/P-56/2013(G)) under the Australia-India Strategic Research Fund. The
second author is also supported by the UGC Centre for Advanced Studies (F.510/6/CAS/2011 (SAP-I)). The authors thank Wolfgang K\"{u}hnel for some useful comments on the earlier version of this paper. 

{\small

}

\end{document}